\newtheorem{thm}{Theorem}[section]
\newtheorem{lemma}[thm]{Lemma}
\newtheorem{proposition}[thm]{Proposition}
\newtheorem{assumption}[thm]{Assumption}
\theoremstyle{definition}
\begin{document}
% \pagenumbering{roman}   % Roman page numbering to start from abstract onwards

\title{Multilevel Markov Chain Monte Carlo for Bayesian inverse problems for Navier Stokes equation with Lagrangian Observations}

\author{Yang Juntao%
  \thanks{Email address: \texttt{yjuntao@nvidia.com}}}
\affil{ Nvidia AI Technology Center, Singapore \\
Division of Mathematical Sciences, School of Physical and Mathematical Sciences, \\
Nanyang Technological University, Singapore}

% \author{Hoang Viet Ha%
%   \thanks{Email address: \texttt{VHHOANG@ntu.edu.sg}}}
% \affil{ Division of Mathematical Sciences, School of Physical and Mathematical Sciences,\\
% Nanyang Technological University, Singapore}
\date{}
\maketitle

\section{Introduction}
We consider the Bayesian inverse problem for recovering the initial velocity and the random forcing of forward Navier-Stokes equation with noisy observation on the position of tracers at some time moments. Data inversion for Navier-Stokes equation is important for areas like weather forecasting, ocean modeling and aerospace engineering. For weather forecasting and ocean modeling, it is common to have positional data from tracers such as weather balloons and drifters. However, sampling the posterior probability of Bayesian inverse problem for the forward Navier-Stokes equation coupled with tracer equation is highly expensive if not intractable. The Markov Chain Monte Carlo (MCMC) sampling procedure requires a large number of realizations of the forward equation to obtain a reasonable level of accuracy, hence leads to high complexity.

For linear elliptic forward problems, in the case of the uniform prior probability measure, Hoang, Schwab and Stuart \cite{ Hoang2013} develop the Multilevel Markov Chain Monte Carlo (MLMCMC) method that approximates the expectation with respect to the posterior probability measure of quantities of interest by solving the forward equation by finite elements (FEs) with different levels of resolution. The number of MCMC  samples is chosen judiciously according to the FE resolution level. The method is essentially optimal. To obtain an approximation for the posterior expectation of a quantity of interest within a prescribed level of accuracy, the total number of degrees of freedom required for solving all the realizations of the forward equation  in the sampling process is equivalent to that for solving only one realization of the forward equation to obtain an equivalent level of FE accuracy. Comparing to the plain MCMC procedure where a large number of realizations of the forward equation is solved with an equally high level of accuracy, the computation time is drastically reduced. The convergence of the method is rigorously proved. Numerical experiments verify the theoretical convergence rate. We have early applied MLMCMC method to Bayesian inverse problem for inferring the unknown forcing and initial condition of the forward Navier-Stokes equation with noisy Eulerian observations on the velocity. \cite{juntao2022}. Rigorous theory has been developed for the case of uniform prior where the forcing and the initial condition depend linearly on a countable set of random variables which are uniformly distributed in a compact interval. 

In this paper, we extend our work to the Bayesian inverse problems for inferring unknown forcing and initial condition of the forward Navier-Stokes equation coupled with tracer equation with noisy Lagrangian observation on the positions of the tracers. We consider the Navier-Stokes equations in the two dimensional periodic torus with a tracer equation which is a simple ordinary differential equation. We developed rigorously the theory for the case of the uniform prior where the forcing and the initial condition depend linearly on a countable set of random variables which are uniformly distributed in a compact interval. Numerical experiment using the MLMCMC method produces approximations for posterior expectation of quantities of interest which are in agreement with the theoretical optimal convergence rate established. 

\section{Parametric Navier Stokes equation}
Let $\mathbb{T}^2$ be the two dimensional unit torus in $\mathbb{R}^2$. Let $[0, T]$ be the time interval of interest. We denote the following function spaces $V:= [H^1(\mathbb{T}^2)]^2$, $M = \{p\in L^2(\mathbb{T}^2), \int_{\mathbb{T}^2} p = 0\}$, $\mathbf{H}=\{u\in [L^2(\mathbb{T}^2)]^2, \: \nabla \cdot u = 0\} $ and $\mathbf{H}^s = \{u \in [H^s(\mathbb{T}^2)]^2, \: \nabla \cdot u = 0\}$ with $\|\cdot\|$ the norm on $\mathbf{H}$ and $\|\cdot\|_s$ the norm on $\mathbf{H}^s$.     We denote by $\mathcal{H} = L^2(0, T; \mathbf{H})$ and $\mathcal{H}^s = L^2(0, T; \mathbf{H}^s)$ with $|\cdot|_0$ the norm in $\mathcal{H}$ and $|\cdot|_s$ the norm in $\mathcal{H}^s$ We consider the two dimensional Navier-Stokes equation in $\mathbb{T}^2$ 
\begin{equation}
\label{navier stokes equation}
    \begin{cases}
    \frac{\partial u }{\partial t} + u \cdot \nabla u - \nu \Delta u + \nabla p = f, \quad & \forall (x, t) \in \mathbb{T}^2 \times (0, T], \\
    \nabla \cdot u = 0, \quad & \forall (x, t) \in \mathbb{T}^2 \times (0, T], \\
    u = u_0, \quad &\forall x \in \mathbb{T}^2 \text{ and } t = 0,
    \end{cases}
\end{equation}
where $u$ and $p$ are the velocity and the pressure, $\nu > 0$ is the viscosity. Let $v$ be a test function in space $V$ and $r$ be a scalar test function in space $M$.  % = \{p\in L^2(\mathbb{T}^2), \int_{\mathbb{T}^2} p = 0\}$. 
We denote by $(\cdot, \cdot)$  the inner product in the space $L^2(\mathbb{T}^2)^2$, extended by density to the duality pairing between $V'$ and $V$. We have the following mixed weak formulation of the Navier-Stokes equation,
\begin{equation}
\label{weak form}
    \begin{cases}
    \text{Find $(u(t), p(t)) \in V \times M$ such that} \\
    \frac{d}{d t}(u, v)+(u \cdot \nabla u, v)+(\nu \nabla u, \nabla v)-(p, \nabla \cdot v) =(f, v), &\quad \forall v \in V, \\
    (\nabla \cdot u, r) =0, &\quad \forall r \in M,\\
    u(0, \cdot) = u_0.
    \end{cases}
\end{equation}
The existence and uniqueness of a solution for the above two dimensional problem is ensured by theorem 3.1 and theorem 3.2 in \cite{Temam1978}.
If $u_0 \in \mathbf{H}$ and $f \in L^2(0, T; V')$, problem \eqref{weak form} possesses a solution $u$ which  belongs to $L^2(0, T; V)$.
However, in this paper we need more regularity for the solution to set up and establish the well-posedness of the Bayesian inverse problem. We thus assume more regularity for the forcing $f$ and the initial condition $u_0$. In particular, we assume a divergence free initial velocity $u_0 \in \mathbf{H}^1$ and $f\in L^2(0, T; \mathbf{H})$ so that $u \in C(0, T; V)$ (see lemma 5.2 in \cite{Cotter2009}).
 By lemma 5.2 in \cite{Cotter2009} we also have $u \in C(0, T; \mathbf{H}^1) \cap L^2(0, T; \mathbf{H}^2)$ which satisfies
\begin{equation}
\label{boundedness}
\|u(t)\|_1^2 + \int_0^t \|u(\tau)\|_2^2 d \tau \leq C(\|u_0\|_1, |f|_0),
\end{equation}
where $C(\|u_0\|_1, |f|_0)$ is a constant. The Bayesian inverse problem is to infer the unknown initial condition and random forcing given observations on the velocity  at a set of times $\{\tau_k\}_{k=1}^K$. Furthermore, to obtain FE error estimates for the Navier-Stokes equation, we need more regularity assumptions on $u_0$ and $f$ (see sections 1-3 of \cite{He2008} for example). Thus in section \ref{sect:FEapproximate}, we will assume more regularity for $u_0$ and $f$. Now we consider $J$ tracers $\{z_j\}_{j=1}^J$ which are transported by the velocity field $u$ of the Navier-Stokes equation \eqref{navier stokes equation}. The trajectories of the tracers are governed by the following ODE equation,
\begin{equation}
\label{transport equation}
\frac{d z_j}{d t} = u(z_j, t), \qquad z_j(0) = z_{j,0},
\end{equation}
where $\{z_{j,0}\}_{j=1}^J$ is a set of initial positions. 
We assume that the initial velocity $u_0$ and the forcing $f$ are represented in the parametric form as
\begin{equation}
\label{series expansion}
\begin{aligned}
    u_0(x) &= \Bar{u}_0(x) + \sum_{i \geq 1} \zeta_i \phi_i(x), \quad & x \in \mathbb{T}^2,\\
    f(t,x) &= \Bar{f}(t,x) + \sum_{i \geq 1} \xi_i \psi_i(t,x), \quad & x \in \mathbb{T}^2, \; t \in (0, T],
\end{aligned}    
\end{equation}
where  $\zeta_i$ and $\xi_i$ are normalized such that $\zeta_i, \xi_i \in [-1, 1] \text{ for } i=1,2,...$ and $\phi_i \in \mathbf{H}^1$, $\psi_i \in L^2(0, T; \mathbf{H})$. We define the sequences $\zeta = (\zeta_1, \zeta_2, ...) \in [-1, 1]^{\mathbb{N}}$ and $\xi = (\xi_1, \xi_2, ...)\in [-1, 1]^{\mathbb{N}}$. To indicate the dependence of $u_0$ on $\zeta$ and of $f$ on $\xi$, and of $u$ on $\zeta, \xi$, we write them as $u_0(x, \zeta)$, $f(t, x, \xi)$, and $u(t, x, \zeta, \xi)$. \\
We make the following assumptions on the decay rate of the sequences $\{\phi_i\}$ and $\{\psi_i\}$. 
\begin{assumption}
\label{random field}
The functions $\Bar{u}_0$, $\phi_i$ are in $\mathbf{H}^1$ and $\Bar{f}$, $\psi_i$ are in $\mathcal{H}$. There exist constants $s > 1$ and $C > 0$ such that
    \[\forall i \in \mathbb{N}: \quad \|\phi_i\|_1 \leq C i^{-s}, \quad |\psi_i|_0 \leq C i^{-s}.\]
\end{assumption}
\noindent
With assumption \ref{random field}, there exist finite positive constants ${u_0}_{\max}$ and $f_{\max}$ such that  
\begin{align*}
&\|u_0(\zeta)\|_1 \leq \|\bar{u} _0\|_1+\sum_{j=1}^{\infty}\|\phi_i\|_1 \leq {u_0}_{\max},\\
& |f(\xi)|_0 \leq |\Bar{f}|_0+ \sum_{j=1}^{\infty}|\psi_i|_0 \leq f_{\max}.
\end{align*}
From assumption \ref{random field}, we deduce that there is a constant $C > 0$ such that
%both sequences $\{\phi_i\}$ and $\{\psi_i\}$ satisfy the following 
for all $I\in \mathbb{N}$,
\begin{equation}
\label{truncation estimate}
\sum_{i>I} \|\phi_i\|_1 < C I^{-q}, \quad \sum_{i>I} |\psi_i|_0 < C I^{-q}, 
\end{equation}
where $q = s-1$. 
Next, we describe the prior measure. 
We denote the probability space $U = [-1, 1]^{\mathbb{N}} \bigotimes [-1, 1]^{\mathbb{N}}$, the set of all pairs $(\zeta, \xi)$ of sequences $\zeta = \{\zeta_i\}_{i\geq 1}$ of coordinates $\zeta_i$ and $\xi = \{\xi_i\}_{i\geq 1}$ of coordinates $\xi_i$, s.t. $\zeta_i \in [-1, 1]$ and $\xi_i \in [-1, 1] \text{ for all } i$. 
We denote the product sigma algebra on the parameter domain $U$ by $\Theta = \left(\bigotimes_{i=1}^{\infty}\mathcal{B}([-1, 1])\right) \bigotimes \left(\bigotimes_{i=1}^{\infty}\mathcal{B}([-1, 1])\right)$, where $\mathcal{B}$ is the Borel sigma algebra. Due to assumption \ref{random field}, the series in (\ref{series expansion}) converge in $\mathbf{H}^1$ and $\mathcal{H}$. Assuming that $\zeta_i$ and $\xi_i$ are uniformly distributed in $[-1, 1]$, we define the product probability on the measurable space $(U, \Theta)$ by
\begin{equation}
\label{eqn probability space}
\gamma = \left(\bigotimes_{i=1}^{\infty} \frac{d \zeta_i}{2}\right) \bigotimes \left(\bigotimes_{i=1}^{\infty}\frac{d \xi_i}{2}\right), 
\end{equation}
where $d\zeta_i$ and $d\xi_i$ denote the Lebesgue measure in $\mathbb{R}$.

\section{Bayesian inverse problem}
In this section, we present the setting for the Bayesian inverse problem. We consider general observations of the trajectories of drifting tracers in the two dimensional velocity field governed by the two dimensional Navier-Stokes equation. The trajectories $\{z_j\}_{j=1}^J$ are observed at a set of times $\{ \tau_k \}_{k=1}^K \subset (0, T]$. We define the forward observation map $\mathcal{G}: U \rightarrow [\mathbb{R}^2]^{JK} $ for all $(\zeta, \xi) \in U $ as 
\begin{equation}
    \label{eqn:forward map}
    \mathcal{G}(\zeta, \xi) := \{z_j(\tau_k)\}_{j, k=1}^{J, K}.
\end{equation}
Let $\vartheta = \{\sigma_{j,k}\}_{j,k=1}^{J,K}$ be the observation noise. $\sigma_{j,k} \in \mathbb{R}^2$ is assumed Gaussian and independent of the parameters $\zeta$ and $\xi$. Thus the random variable $\vartheta$ has values in $[\mathbb{R}^2]^{JK}$ and follows the normal distribution $N(0, \Sigma)$, where $\Sigma$ is a known $2JK \times 2JK$ symmetric positive covariance matrix. The noisy observation $y$ is 
\begin{equation*}
    y = \mathcal{G}(\zeta, \xi) + \vartheta. 
\end{equation*}
The posterior probability measure is the conditional probability of $(\zeta, \xi)$ in $U$ given observation $y$. We define the mismatch function 
\begin{equation}
\label{eqn:mismatch function}
    \Phi(\zeta, \xi; y) = \frac{1}{2}\|y-\mathcal{G}(\zeta, \xi)\|_{\Sigma}^2,
\end{equation}
where $\|\cdot\|_{\Sigma}^2 = \langle \Sigma^{-1/2}\cdot, \Sigma^{-1/2} \cdot \rangle$ with $\langle \cdot, \cdot \rangle$ being the inner product in $[\mathbb{R}^2]^{JK}$. 
\begin{lemma}
Under assumption \ref{random field}, the forward map $\mathcal{G}(\zeta, \xi): U \rightarrow [\mathbb{R}^2]^{JK}$ is continuous as a mapping from the measurable space $(U, \Theta)$ to $([\mathbb{R}^2]^{JK}, \mathcal{B}([\mathbb{R}^2]^{JK}))$.
\end{lemma}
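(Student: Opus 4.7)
The plan is to endow $U$ with the product topology generated by the Euclidean topology on each factor $[-1,1]$; since $U$ is then a countable product of compact metric spaces it is metrizable, its Borel $\sigma$-algebra coincides with $\Theta$, and it suffices to establish sequential continuity (which simultaneously furnishes $(\Theta,\mathcal{B})$-measurability of $\mathcal{G}$). I would factor $\mathcal{G}$ as the composition $(\zeta,\xi)\mapsto (u_0(\cdot,\zeta),f(\cdot,\cdot,\xi))\mapsto u\mapsto \{z_j(\tau_k)\}_{j,k}$ and prove that each arrow is continuous.

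Fix a sequence $(\zeta^{(n)},\xi^{(n)})\to(\zeta,\xi)$ componentwise. For the first arrow, the representation \eqref{series expansion} gives $\|u_0(\cdot,\zeta^{(n)})-u_0(\cdot,\zeta)\|_1\le\sum_{i\ge 1}|\zeta_i^{(n)}-\zeta_i|\,\|\phi_i\|_1$, and this vanishes by truncating at a level $I$: Assumption~\ref{random field} with \eqref{truncation estimate} makes the tail $2\sum_{i>I}\|\phi_i\|_1$ arbitrarily small uniformly in $n$, while the finite head tends to zero by componentwise convergence. The identical argument yields $f(\cdot,\cdot,\xi^{(n)})\to f(\cdot,\cdot,\xi)$ in $\mathcal{H}$. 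For the second arrow, I would set $w=u^{(n)}-u$, subtract the weak formulations \eqref{weak form}, and run the classical 2D Navier--Stokes stability estimates: testing with $w$, controlling the trilinear nonlinearity via Ladyzhenskaya's 2D interpolation $\|v\|_{L^4}^2\le C\|v\|\,\|v\|_1$, and invoking the uniform a priori bound \eqref{boundedness} with Gronwall, gives $w\to 0$ in $C(0,T;\mathbf{H})\cap L^2(0,T;V)$. A second energy identity obtained by testing with $-\Delta w$, again anchored by \eqref{boundedness}, bootstraps this to convergence in $C(0,T;\mathbf{H}^1)\cap L^2(0,T;\mathbf{H}^2)$.

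For the third arrow, integrating \eqref{transport equation} and applying the triangle inequality,
\[
|z_j^{(n)}(\tau_k)-z_j(\tau_k)|\le\int_0^{\tau_k}\|u^{(n)}(\cdot,s)-u(\cdot,s)\|_{L^\infty}\,ds+\int_0^{\tau_k}\bigl|u(z_j^{(n)}(s),s)-u(z_j(s),s)\bigr|\,ds.
\]
The first integral tends to zero via the $L^2(0,T;\mathbf{H}^2)$ convergence from Step~2 together with the 2D Sobolev embedding $\mathbf{H}^2\hookrightarrow L^\infty$; the second I would close by a Gronwall/Osgood argument applied to $|z_j^{(n)}(s)-z_j(s)|$. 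The main obstacle lies precisely in this Lagrangian stability step: in two dimensions $\mathbf{H}^2$ does not embed into $W^{1,\infty}$, so $\nabla u(\cdot,s)$ is not uniformly Lipschitz and a textbook Gronwall does not directly apply. The cleanest resolution is to invoke the log-Lipschitz (Osgood) modulus available for divergence-free fields with $\nabla u\in L^2(0,T;\mathbf{H}^1)$, in the spirit of Yudovich's argument, which delivers both unique trajectories for \eqref{transport equation} and Osgood-type continuous dependence of the flow on $u$, thereby completing the chain and establishing continuity of $\mathcal{G}$.
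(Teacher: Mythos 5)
Your argument is correct in outline, but it is considerably more self-contained than the paper's, which disposes of the whole lemma in three lines by citing Lemma 3.13 of Cotter et al.\ \cite{Cotter2009}: that cited result already delivers the Lipschitz estimate $|\mathcal{G}(\zeta,\xi)-\mathcal{G}(\zeta',\xi')|\le C(\|u_0(\zeta)-u_0(\zeta')\|_1+|f(\xi)-f(\xi')|_0)$, after which the paper performs only your ``first arrow'' computation, bounding the data difference by $\sum_i|\zeta_i-\zeta_i'|\|\phi_i\|_1+\sum_i|\xi_i-\xi_i'||\psi_i|_0\le C(\|\zeta-\zeta'\|_{\ell^\infty}+\|\xi-\xi'\|_{\ell^\infty})$ via Assumption~\ref{random field}. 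Your second and third arrows are precisely a reproof of the content of that cited lemma: the energy/Gronwall stability of the 2D Navier--Stokes solution map anchored by \eqref{boundedness}, followed by stability of the particle trajectories. You correctly identify the one genuinely delicate point --- in 2D, $u(\cdot,s)\in\mathbf{H}^2$ is not Lipschitz in space, so naive Gronwall fails on the Lagrangian step --- and your proposed fix (the log-Lipschitz modulus for $\mathbf{H}^2$ fields combined with Osgood's lemma, with the time-dependent constant $\|u(s)\|_2$ only in $L^2(0,T)$, which Osgood tolerates) is exactly the mechanism underlying the result of \cite{Cotter2009} (in the spirit of Dashti--Robinson/Yudovich), so there is no gap. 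Two remarks on what each route buys: the paper's citation yields a quantitative \emph{Lipschitz} bound in the data, which it reuses later (e.g.\ in Proposition 4.1 for the truncation error), whereas your sequential-continuity formulation proves only continuity as stated --- though your Osgood step in fact produces a modulus of continuity and could be upgraded; conversely, your explicit treatment of measurability (product $\sigma$-algebra equals Borel $\sigma$-algebra of the metrizable product topology) addresses a point the paper passes over silently.
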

\begin{proof}
Following from Lemma 3.13 from \cite{Cotter2009}, we have 
\begin{align*}
    |\mathcal{G}(\zeta, \xi)-\mathcal{G}(\zeta',\xi')| &\leq C(\|u_0(\zeta)-v_0(\zeta')\|_1 + |f(\xi)-f(\xi')|_0) \\
    &\leq C\left(\sum_{i\geq 1} |\zeta_i-\zeta_i'|\|\phi_i\|_1+\sum_{i\geq 1}|\xi_i-\xi_i'||\psi_i|_0\right) \\
    &\leq C(\|\zeta-\zeta'\|_{l^\infty(\mathbb{N})}+\|\xi-\xi'\|_{l^\infty(\mathbb{N}))}.    
\end{align*}
Hence we have the continuity of $\mathcal{G}$ with respect to $\zeta, \xi$.
\end{proof}
With the continuity with respect to the random parameters proven, we have the following theorem on the existence of Randon-Nikodym derivative as a result of corollary 2.2. in \cite{Cotter2009}. 
\begin{thm}
The posterior probability measure $\gamma^\delta$ is absolutely continuous with respect to the prior $\gamma$. The Radon-Nikodym derivative is given by 
\begin{equation}
    \frac{d \gamma^y}{d \gamma} \propto \exp(-\Phi(\zeta, \xi; y))
\end{equation}
\end{thm}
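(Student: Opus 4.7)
My plan is to invoke Corollary~2.2 of \cite{Cotter2009}, an abstract Bayes' theorem for inverse problems, which applies once the mismatch $\Phi$ has been shown to have the appropriate measurability and integrability. Concretely, I would verify (a) that $\Phi(\cdot,\cdot;y):U\to\mathbb{R}$ is $\Theta$-measurable, and (b) that the normalising constant
\[
Z(y):=\int_U \exp\bigl(-\Phi(\zeta,\xi;y)\bigr)\,d\gamma(\zeta,\xi)
\]
satisfies $0<Z(y)<\infty$. The claimed identity $d\gamma^y/d\gamma\propto\exp(-\Phi)$ then follows directly from the corollary with normalising constant $Z(y)$.

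For (a) I would write $\Phi$ as the composition of the continuous quadratic map $w\mapsto\tfrac12\|y-w\|_\Sigma^2$ on $[\mathbb{R}^2]^{JK}$ with the forward map $\mathcal{G}$; the preceding lemma supplies the $(\Theta,\mathcal{B})$-measurability of $\mathcal{G}$, and the composition is therefore $\Theta$-measurable. Since $\Phi\ge 0$ by definition, we have $\exp(-\Phi)\le 1$ and so the finiteness half of (b) is automatic, $Z(y)\le 1$.

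The only substantive step is the strict positivity of $Z(y)$, for which it suffices to produce a uniform upper bound $M(y)<\infty$ on $\Phi(\cdot,\cdot;y)$ over $U$, since then $Z(y)\ge e^{-M(y)}>0$. Such a bound follows by combining the stability estimate on $\mathcal{G}$ used in the proof of the preceding lemma (itself a consequence of Lemma~3.13 of \cite{Cotter2009} together with the a priori bound \eqref{boundedness}) with the fact that $U$ is $\ell^\infty$-bounded by $1$: for any $(\zeta,\xi)\in U$,
\[
|\mathcal{G}(\zeta,\xi)|\le|\mathcal{G}(0,0)|+C\bigl(\|\zeta\|_{\ell^\infty(\mathbb{N})}+\|\xi\|_{\ell^\infty(\mathbb{N})}\bigr)\le|\mathcal{G}(0,0)|+2C,
\]
so $\Phi(\zeta,\xi;y)\le\tfrac12\bigl(|y|+|\mathcal{G}(0,0)|+2C\bigr)^2\|\Sigma^{-1/2}\|^2=:M(y)<\infty$. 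I do not expect any real obstacle here; the verification is essentially bookkeeping built on the preceding continuity lemma, Assumption~\ref{random field}, and the boundedness of the parameter domain.
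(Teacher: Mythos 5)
Your proposal is correct and follows essentially the same route as the paper, which likewise obtains the theorem by combining the continuity lemma for $\mathcal{G}$ with Corollary~2.2 of the cited work of Cotter et al.; the paper simply asserts the conclusion without spelling out the verification. Your explicit check that $0<Z(y)<\infty$ via the uniform bound on $\mathcal{G}$ over the $\ell^\infty$-bounded parameter domain $U$ is exactly the bookkeeping the paper leaves implicit, so there is no substantive difference in approach.
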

Next we consider the continuity of the posterior measure in the Hellinger distance with respect to the observation data, which implies the well-posedness of the posterior measure. The Hellinger distance is defined as 
\begin{equation}
    d_{Hell}(\gamma', \gamma'') = \left(\frac{1}{2}\int_U(\sqrt{\frac{d\gamma'}{d\gamma}}-\sqrt{\frac{d\gamma''}{d\gamma}})d\gamma\right)^{1/2},
\end{equation}
where $\gamma'$ and $\gamma''$ are two measures on $U$, which are absolutely continuous with respect to the measure $\lambda$. It is shown in \cite{Hoang2013} that the Lipschitzness of the posterior measure with respect to the Hellinger distance holds under general conditions. 
\begin{proposition}
The measure $\gamma^y$ depends locally Lipschitz continuously on the data $y$ with respect to the Hellinger metric: for every $r > 0$ and $y, y' \in \mathbb{R}^d$ such that for $|y|_\Sigma, |y'|_\Sigma \leq r$, there exists $C = C(r) > 0$ such that 
\begin{equation}
d_{Hell}(\gamma^y, \gamma^{y'}) \leq C(r)|y-y'|_\Sigma. 
\end{equation}
\end{proposition}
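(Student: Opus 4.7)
The plan is to follow the standard Stuart-style well-posedness argument, reducing the Hellinger estimate to three ingredients: a uniform bound on the forward map $\mathcal{G}$ on $U$, a local Lipschitz bound on the potential $\Phi(\zeta,\xi;\cdot)$ in $y$ that is uniform in $(\zeta,\xi)$, and two-sided control of the normalization constant $Z(y) := \int_U \exp(-\Phi(\zeta,\xi;y))\,d\gamma(\zeta,\xi)$.

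First I would establish that $\mathcal{G}$ is uniformly bounded on $U$. Integrating the tracer ODE \eqref{transport equation} yields $|z_j(\tau_k) - z_{j,0}| \leq \int_0^{\tau_k} \|u(\cdot,s)\|_{L^\infty}\,ds$, which by Cauchy--Schwarz and the 2D Sobolev embedding $\mathbf{H}^2(\mathbb{T}^2) \hookrightarrow L^\infty(\mathbb{T}^2)^2$ is controlled by $T^{1/2}\|u\|_{L^2(0,T;\mathbf{H}^2)}$. The a priori estimate \eqref{boundedness} combined with the deterministic bounds $\|u_0(\zeta)\|_1 \leq {u_0}_{\max}$ and $|f(\xi)|_0 \leq f_{\max}$ from Assumption~\ref{random field} then yields $|\mathcal{G}(\zeta,\xi)|_\Sigma \leq M$ for some finite $M$ independent of $(\zeta,\xi)$.

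Next, using the polarization identity $\|a\|_\Sigma^2 - \|b\|_\Sigma^2 = \langle a-b,\, a+b\rangle_\Sigma$ and Cauchy--Schwarz, I would obtain for $|y|_\Sigma, |y'|_\Sigma \leq r$ the uniform upper bound $0 \leq \Phi(\zeta,\xi;y) \leq \tfrac{1}{2}(r+M)^2$ and the uniform local Lipschitz estimate
\[
|\Phi(\zeta,\xi;y) - \Phi(\zeta,\xi;y')| \leq (r+M)\,|y-y'|_\Sigma.
\]
These in turn give $Z(y) \geq \exp(-(r+M)^2/2) > 0$ and, via $|e^{-a}-e^{-b}| \leq |a-b|$ for $a,b\geq 0$, the bound $|Z(y)-Z(y')| \leq (r+M)\,|y-y'|_\Sigma$.

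The final step is the algebraic computation familiar from \cite{Cotter2009, Hoang2013}: decompose
\[
\sqrt{\tfrac{d\gamma^y}{d\gamma}} - \sqrt{\tfrac{d\gamma^{y'}}{d\gamma}} = \frac{e^{-\Phi(\zeta,\xi;y)/2} - e^{-\Phi(\zeta,\xi;y')/2}}{\sqrt{Z(y)}} + e^{-\Phi(\zeta,\xi;y')/2}\!\left(\frac{1}{\sqrt{Z(y)}} - \frac{1}{\sqrt{Z(y')}}\right),
\]
square, apply $(a+b)^2 \leq 2a^2 + 2b^2$ together with $|e^{-a/2} - e^{-b/2}| \leq \tfrac{1}{2}|a-b|$ for $a,b\geq 0$, integrate against $\gamma$, and plug in the previously established bounds on $\Phi$ and on $Z$. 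The constant $C(r)$ then comes out explicitly in terms of $r$ and $M$. There is no substantive obstacle here; the only non-routine point is the uniform-in-$(\zeta,\xi)$ boundedness of $\mathcal{G}$, which as noted reduces directly to the regularity estimate \eqref{boundedness} combined with Assumption~\ref{random field}.
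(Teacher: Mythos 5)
Your proposal is correct and follows essentially the same route the paper relies on: the paper gives no details for this proposition, deferring to Proposition 3.3 of \cite{juntao2022} and the general framework of \cite{Hoang2013,Cotter2009}, and that argument is precisely the one you reconstruct --- uniform boundedness of $\mathcal{G}$ on $U$ via \eqref{boundedness} and Assumption \ref{random field}, a uniform local Lipschitz bound for $\Phi$ in $y$, two-sided control of the normalization constant, and the standard Hellinger decomposition. I see no gaps.
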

The proof of the proposition is similar to the one for Proposition 3.3. in \cite{juntao2022}. 

\section{Posterior approximation by finite truncation of the forcing and the initial condition}
We consider the approximation of the forward equation by truncating the series expansion \eqref{series expansion} for the forcing and the initial velocity after $I$ terms. Let 
\begin{equation}
\label{eqn:truncated expansion}
    u_0^I(x, \zeta) = \bar{u}_0(x) + \sum_{i=1}^I \phi_i(x)\zeta_i, \quad f^I(t, x, \xi) = \bar{f}(t, x) + \sum_{i=1}^I \psi_i(t, x) \xi_i. 
\end{equation}
We consider the truncated problem, 
\begin{equation}
\label{truncated weak form}
    \begin{cases}
    \text{Find } u^I(t) \in V, p^I(t) \in M \text{ such that }\\
    \frac{d}{d t}(u^I, v)+(u^I \cdot \nabla u^I, v)+(\nu \nabla u^I, \nabla v)-(p^I, \nabla \cdot v) =( f^I, v), &\quad \forall v \in V,\\
    (\nabla \cdot u^I, r) =0, &\quad \forall r \in M,\\
    u^I(0, \cdot) = u_0^I,
    \end{cases}
\end{equation}
with the tracer trajectory 
\begin{equation}
    \frac{d z_j^I}{d t} = u(z_j^I, t), \qquad z_j^I(0) = z_{j,0}.
\end{equation} 
\begin{proposition}
\label{eqn:forward map truncation error}
Under assumption \ref{random field}, the truncated forward map $\mathcal{G}^I(\zeta, \xi) := \{z_j^I(\tau_k)\}_{j,k=1}^{J,K}$ satisfies the estimate,
\begin{equation}
    |\mathcal{G}(\zeta, \xi)-\mathcal{G}^I(\zeta, \xi)| \leq C I^{-q}. 
\end{equation}
\end{proposition}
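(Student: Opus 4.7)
The plan is to reduce the estimate to the Lipschitz-type bound on the forward map already used in the preceding lemma, and then control the effect of truncation of the expansions \eqref{series expansion} via the tail bound \eqref{truncation estimate}. The key observation is that $\mathcal{G}^I(\zeta,\xi)$ is nothing other than the forward observation map evaluated on the Navier--Stokes system whose initial datum and forcing are $u_0^I(\zeta)$ and $f^I(\xi)$; equivalently, it equals $\mathcal{G}(\zeta',\xi')$ for the parameter pair obtained from $(\zeta,\xi)$ by setting all coordinates of index $i>I$ to zero. Since all parameters remain in $[-1,1]$, the a priori estimate \eqref{boundedness} gives a uniform $H^1\cap L^2 H^2$ bound for both $u(\cdot;\zeta,\xi)$ and $u^I(\cdot;\zeta,\xi)$ in terms of $u_{0,\max}$ and $f_{\max}$, so the Lipschitz constants encountered below are independent of $(\zeta,\xi)$ and of $I$.

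The first step is to apply Lemma 3.13 of \cite{Cotter2009} (exactly as in the proof of the previous lemma) to the two Navier--Stokes solutions and the associated tracer trajectories, yielding
\begin{equation*}
\bigl|\mathcal{G}(\zeta,\xi)-\mathcal{G}^I(\zeta,\xi)\bigr|
\;\leq\; C\bigl(\|u_0(\zeta)-u_0^I(\zeta)\|_1 + |f(\xi)-f^I(\xi)|_0\bigr),
\end{equation*}
with $C$ depending only on $\nu$, $T$, $u_{0,\max}$, $f_{\max}$, $J$ and the observation times $\{\tau_k\}$.

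The second step is to estimate the two tail terms on the right-hand side using \eqref{series expansion} together with $|\zeta_i|,|\xi_i|\leq 1$:
\begin{equation*}
\|u_0(\zeta)-u_0^I(\zeta)\|_1
= \Bigl\|\sum_{i>I}\zeta_i\phi_i\Bigr\|_1
\leq \sum_{i>I}\|\phi_i\|_1,
\qquad
|f(\xi)-f^I(\xi)|_0
\leq \sum_{i>I}|\psi_i|_0,
\end{equation*}
and then invoking the tail bound \eqref{truncation estimate} to conclude that each sum is $\leq C I^{-q}$. Combining with the previous display gives the claimed $CI^{-q}$ bound.

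The only non-routine point, and the step where care is required, is checking that the Lipschitz constant in the first step is genuinely uniform in $(\zeta,\xi)$ and in $I$. The nonlinear term in the Navier--Stokes equation is handled by a Grönwall argument that produces a constant of exponential type in $\int_0^T\|\nabla u\|_\infty\,dt$ or a related quantity; in two dimensions this is controlled via the regularity $u\in L^2(0,T;\mathbf{H}^2)\cap C(0,T;\mathbf{H}^1)$ from \eqref{boundedness}, and the resulting constant depends only on $u_{0,\max}$ and $f_{\max}$. Once this uniformity is in hand, the two previous steps close the proof.
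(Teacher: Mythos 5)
Your proposal is correct and follows essentially the same route as the paper's proof: both reduce the estimate to the Lipschitz bound of Lemma 3.13 in the Cotter et al. reference applied to the full and truncated data, and then control $\|u_0-u_0^I\|_1$ and $|f-f^I|_0$ by the tail bound \eqref{truncation estimate}. Your additional remarks on the uniformity of the Lipschitz constant in $(\zeta,\xi)$ and $I$ make explicit a point the paper only asserts.
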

\begin{proof}
With $u_0 \in \mathbf{H}^1$ and $f \in \mathcal{H}^1$, we have
\begin{equation*}
|\mathcal{G}(\zeta, \xi)-\mathcal{G}^I(\zeta, \xi)| \leq C(\|u_0\|_1, \|u_0^I\|_1, |f|_1, |f^I|_1)(\|u_0-u_0^I\|_s^2 + |f-f^I|_0^2)^{\frac{1}{2}}.
\end{equation*}
The preceding conclusion is by Lemma 3.13 in \cite{Cotter2009}. Due to assumption \ref{random field} and $\|u_0 - u_0^I\|_1^2 \leq C I^{-2q}, |f-f^I|_0^2 \leq C I^{-2q}$. Hence, we conclude that 
\begin{equation*}
|\mathcal{G}(\zeta, \xi)-\mathcal{G}^I(\zeta, \xi)| \leq C I^{-q},
\end{equation*}
where $C$ is a constant uniformly bounded with respect to $(\zeta, \xi) \in U$.
\end{proof}
Then we define the approximated posterior measure $\gamma^{I,y}$ as,
\begin{equation*}
    \frac{d \gamma^{I,y}}{d \gamma} \propto \exp(-\Phi^I(\zeta, \xi; y)), 
\end{equation*}
where $\Phi^I(\zeta, \xi; y)$ is the potential function
\begin{equation}
\Phi^I(\zeta, \xi; y) = \frac{1}{2}|y - \mathcal{G}^I(\zeta, \xi)|_\Sigma^2. 
\end{equation}
The measure $\gamma^{I, y}$ is an approximation of the Bayesian posterior. Next we show the error estimate for the approximation of the posterior measure by the solution of the truncated equation in the Hellinger metric. 
\begin{proposition}
Under assumption \ref{random field} and proposition \ref{eqn:forward map truncation error}, there is a constant $C(y) > 0$ such that for every $I$
\begin{equation}
    d_{Hell}(\gamma^y, \gamma^{I, y}) \leq C(y) I^{-q}.
\end{equation}
\end{proposition}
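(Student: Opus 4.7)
The plan is to follow the standard strategy used in \cite{Cotter2009} and \cite{Hoang2013} for bounding the Hellinger distance between a true and an approximate posterior in terms of the forward-map approximation error. I first write
\[
d_{Hell}^2(\gamma^y, \gamma^{I,y}) = \frac{1}{2}\int_U \left(\frac{1}{\sqrt{Z}}e^{-\Phi(\zeta,\xi;y)/2} - \frac{1}{\sqrt{Z^I}}e^{-\Phi^I(\zeta,\xi;y)/2}\right)^2 d\gamma,
\]
where $Z = \int_U e^{-\Phi}\,d\gamma$ and $Z^I = \int_U e^{-\Phi^I}\,d\gamma$ are the normalizing constants. The essential ingredient is a uniform pointwise bound on $\Phi - \Phi^I$, after which the remainder of the argument is algebraic.

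To get the pointwise mismatch bound I use the identity
\[
\Phi - \Phi^I = \tfrac{1}{2}\langle \mathcal{G}^I - \mathcal{G}, \Sigma^{-1}(2y - \mathcal{G} - \mathcal{G}^I)\rangle
\]
combined with the uniform boundedness of $\mathcal{G}$ and $\mathcal{G}^I$ on $U$. The latter follows from the a priori estimate \eqref{boundedness} applied to $u$ and $u^I$ (using that $\|u_0(\zeta)\|_1$ and $|f(\xi)|_0$ are uniformly bounded on $U$ by Assumption \ref{random field}) together with the integral form of the tracer ODE \eqref{transport equation}, which keeps all observed positions $z_j(\tau_k), z_j^I(\tau_k)$ in a fixed compact subset of $\mathbb{R}^2$ independent of $(\zeta,\xi)$ and $I$. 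Coupling this with Proposition \ref{eqn:forward map truncation error} yields
\[
|\Phi(\zeta,\xi;y) - \Phi^I(\zeta,\xi;y)| \leq C(y)\, I^{-q}
\]
uniformly in $(\zeta,\xi) \in U$. The same uniform boundedness of $\Phi$ and $\Phi^I$ gives lower bounds $Z, Z^I \geq c(y) > 0$, and integrating the pointwise mismatch over $(U,\gamma)$ produces $|Z - Z^I| \leq C(y)\,I^{-q}$.

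I then decompose the integrand by the triangle inequality as
\[
\frac{1}{\sqrt{Z}}e^{-\Phi/2} - \frac{1}{\sqrt{Z^I}}e^{-\Phi^I/2} = \frac{1}{\sqrt{Z}}\bigl(e^{-\Phi/2} - e^{-\Phi^I/2}\bigr) + \Bigl(\frac{1}{\sqrt{Z}} - \frac{1}{\sqrt{Z^I}}\Bigr)e^{-\Phi^I/2}.
\]
For the first term I apply the elementary inequality $|e^{-a/2} - e^{-b/2}| \leq \tfrac{1}{2}|a-b|$ for $a,b \geq 0$, giving an $O(I^{-2q})$ contribution to $d_{Hell}^2$. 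For the second term I use $|Z^{-1/2} - (Z^I)^{-1/2}| \leq \tfrac{1}{2}\min(Z,Z^I)^{-3/2}|Z - Z^I|$, again producing an $O(I^{-2q})$ contribution. Taking the square root delivers the claimed rate $d_{Hell}(\gamma^y, \gamma^{I,y}) \leq C(y) I^{-q}$.

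The main obstacle, such as it is, is not the algebraic bookkeeping but verifying that all constants can be chosen independently of $(\zeta,\xi) \in U$ and of the truncation level $I$. This uniformity ultimately rests on the energy estimate \eqref{boundedness} for both the exact and truncated Navier-Stokes velocities together with Assumption \ref{random field}, which together confine the forward maps $\mathcal{G}$ and $\mathcal{G}^I$ to a common compact region of $[\mathbb{R}^2]^{JK}$; once this is in hand, the remaining steps are exactly those of the analogous Hellinger continuity arguments in \cite{Cotter2009,Hoang2013,juntao2022}.
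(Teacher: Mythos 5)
Your argument is correct and is precisely the standard Hellinger-continuity argument (pointwise bound on $\Phi-\Phi^I$ via Proposition \ref{eqn:forward map truncation error} and uniform boundedness of $\mathcal{G},\mathcal{G}^I$, lower bounds on the normalizing constants, then the two-term decomposition of the square roots of the densities) that the paper implicitly relies on; the paper itself states this proposition without proof, deferring to the analogous results in \cite{Cotter2009,Hoang2013,juntao2022}, whose proofs follow exactly your outline. All the uniformity claims you flag as the only real issue do hold, since the energy estimate \eqref{boundedness} together with $L^2(0,T;\mathbf{H}^2)\hookrightarrow L^1(0,T;L^\infty)$ in two dimensions confines the tracer displacements, and hence $\mathcal{G}$ and $\mathcal{G}^I$, to a compact set independent of $(\zeta,\xi)$ and $I$.
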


\section{FE Approximation of the truncated problem} 
\label{sect:FEapproximate}
We describe the FE approximation of solution $u^I, p^I$ of \eqref{truncated weak form} with the truncated forcing and initial condition in \eqref{eqn:truncated expansion}. In the two dimensional periodic unit torus $\mathbb{T}^2$, we define the following nested family $\{\mathcal{T}^l\}_{l=0}^{\infty}$ of simplicial partition of $\mathbb{T}^2$. 
%The nested sequence of simplices is defined recursively as: the
The domain $\mathbb{T}^2$ is first subdivided into a regular family $\mathcal{T}^0$ of simplices $T$ which are periodically distributed; then for $l \geq 1$, each simplex in $\mathcal{T}^l$ is obtained by subdividing each simplex in $\mathcal{T}^{l-1}$ into $4$ congruent triangles. Hence the mesh size $h_l = \max\{\text{diam($T$)}: T \in \mathcal{T}^l\}$ of $\mathcal{T}^l$ is $h_l=2^{-l}h_0$. We define the following nested hierarchical family of spaces of $\mathbb{P}_1$-iso-$\mathbb{P}_2/\mathbb{P}_1$ FE spaces on $\mathcal{T}^l$ as 
\begin{align*}
V^l &= \{u \in V : u|_T \in [\mathcal{P}_1(T)]^{2} \quad \forall T \in \mathcal{T}^{l+1} \}, \\
M^l &= \{p \in M : p|_T \in \mathcal{P}_1(T) \quad \forall T \in \mathcal{T}^l \},    
\end{align*}
where $\mathcal{P}_1(T) $ denotes the set of linear functions in the simplex $T \in \mathcal{T}^l \text{ or } T \in \mathcal{T}^{l+1}$. With the FE approximation space defined, we consider the FE approximation of the truncated problem
\begin{equation}
\label{fem approximation}
\begin{cases}
    \text{Seek } u^{I,l}(t) \in V^l \text{ and } p^{I,l}(t) \in M^l \text{ such that} \\
    (\frac{du^{I,l}}{dt}, v^l) + (\nu \nabla u^{I,l}, \nabla v^l) + ((u^{I,l} \cdot \nabla) u^{I,l}, v^l) + (\nabla p, v^l) = (f^I, v^l), &\forall v^l \in V^l, \\
    (\nabla \cdot u^{I,l}, r^l) = 0,  &\forall r^l \in M^l, \\
\end{cases}    
\end{equation}
with $I,l \in \mathbb{N}$. We choose simplicial partition with $\mathbb{P}_1$-iso-$\mathbb{P}_2/\mathbb{P}_1$ FE spaces in the numerical analysis, but the analysis is also valid for nested hyperrectangular partition with $\mathbb{Q}_1$-iso-$\mathbb{Q}_2/\mathbb{Q}_1$ FE pair. The wellposedness of the two dimensional problem for approximating the Navier-Stokes equation with $\mathbb{P}_1$-iso-$\mathbb{P}_2/\mathbb{P}_1$ element is well known (see \cite{Temam1978,He2008}). More references can be found in \cite{AlexandreErn2004,Girault1986,John2016}. To solve the time continuous FE approximation problem, we consider the Implicit/Explicit (IMEX) Euler scheme  (see \cite{He2008}). However, other time schemes can be employed, e.g. implicit Euler and implicit Crank-Nicholson schemes \cite{Heywood1990,John2016}. In the multi-level setup, we recursively bisect the time discretization at level $l$ to get time discretization at level $l+1$. For $n = 1, 2, ..., 2^l$, we define $t_n = n \Delta t_l$, where $\Delta t_l  = T 2^{-l}$. We denote by $u^{I,l,n}, p^{I,l,n}$ the solution at time step $t_n$. Hence under the implicit/explicit Euler scheme, we solve the following saddle point problem for each time step. 
\begin{equation}
\label{chap8:imex discretized eqn}
\begin{cases}
    \text{Seek } u^{I,l,n+1} \in V^l \text{ and } p^{I,l,n+1} \in M^l \text{ such that} \\
    (\frac{u^{I,l,n+1}}{\Delta t}, v^l) + (\nu \nabla u^{I,l,n+1}, \nabla v^l) + ((u^{I,l,n} \cdot \nabla) u^{I,l, n+1}, v^l)  \\
    \qquad\qquad\qquad\qquad\qquad\qquad\qquad\quad  + (\nabla p^{I,l,n+1}, v^l) = (f^J + \frac{u^{I,l,n}}{\Delta t}, v^l), &\forall v^l \in V^l, \\
    (\nabla \cdot u^{I,l,n+1}, r^l) = 0,  &\forall r^l \in M^l. 
\end{cases}    
\end{equation}
Let the operator $a(u^{I,l,n}; u^{I,l,n+1}, v^l) = (\frac{u^{I,l,n+1}}{\Delta t}, v^l) + (\nu \nabla u^{I,l,n+1}, \nabla v^l) + ((u^{I,l,n} \cdot \nabla) u^{I,l, n+1}, v^l)$ and operator $b(u^{I,l,n+1}, r^l) =- (\nabla \cdot u^{I,l,n+1}, r^l)$. We have the following saddle point problem, 
\begin{equation}
\label{chap8:discrete saddle point problem}
\begin{cases}
    \text{Seek } u^{I,l,n+1} \in V^l \text{ and } p^{I,l,n+1} \in M^l \text{ such that} \\
    a(u^{I,l,n}; u^{I,l,n+1}, v^l) + b(v^l, p^{I,l,n+1}) = (f^J + \frac{u^{I,l,n}}{\Delta t}, v^l), &\forall v^l \in V^l \\
    b(u^{I,l,n+1}, r^l) = 0,  &\forall r^l \in M^l. 
\end{cases}    
\end{equation}
To solve Stokes like equations, FE spaces have to satisfy the inf-sup condition uniformly with respect to $l$. Ern and Guermond \cite{AlexandreErn2004} provide a comprehensive list of FE spaces that satisfy the inf-sup condition. Bubble element, Taylor-Hood element $(\mathbb{P}_2 /\mathbb{P}_1 )$ and $\mathbb{P}_1$-iso-$\mathbb{P}_2 /\mathbb{P}_1$ element are some commonly known elements that satisfy the inf-sup conditions and widely adopted for solving fluid mechanics problems. More detailed error analysis of the implicit/explicit Euler scheme can be found in He’s paper \cite{He2008}; and analysis for second order time scheme can be found in \cite{Heywood1990}. We choose the implicit/explicit Euler scheme in our analysis. To further estimate the FE approximation error with the implicit/explicit Euler scheme, we make the following regularity assumption. 
\begin{assumption}
\label{chap8:regularity assumption on u0 and f}
We assume $\bar{u} _0\in D(A)$,  $\Bar{f}, \Bar{f}_t, \Bar{f}_{tt}\in L^{\infty}(0, T; [L^2(\mathbb{T})]^2)$ , and $ \phi_i \in D(A)$ and $\psi^i, \psi^i_{t}, \psi^i_{tt} \in L^{\infty}(0, T; [L^2(\mathbb{T})]^2)$ for all $i \in \mathbb{N}$ such that
\begin{equation}
\|A \bar{u} _0\|_{L^2(D)} + \sum_{i\geq 1} \|A \phi_i\|_{L^2(D)} \leq C,
\end{equation}
and
\begin{equation}
\begin{aligned}
\sup_{0 \leq t \leq T} \{\|\Bar{f}(t)&\|_{L^2(\mathbb{T}^2)} + \|\Bar{f}_t(t)\|_{L^2(\mathbb{T}^2)}+\|\Bar{f}_{tt}(t)\|_{L^2(\mathbb{T}^2)}\} \\
&+ \sup_{0 \leq t \leq T} \{\sum_{i\geq 1 }(\|\psi^i(t)\|_{L^2(\mathbb{T}^2)}+ \|\psi^i_t(t)\|_{L^2(\mathbb{T}^2)}+\|\psi^i_{tt}(t)\|_{L^2(\mathbb{T}^2)})\} \leq C,
\end{aligned}
\end{equation}
where $C$ is a positive constant and $A=-P \Delta$ is the Stokes operator; $P$ is the orthogonal projection from $L^2 \rightarrow \mathbf{H}$.
\end{assumption}
With assumption \ref{chap8:regularity assumption on u0 and f}, we deduce
\begin{equation}
\|A u_0\|_{L^2(\mathbb{T}^2)} + \sup_{0 \leq t \leq T} \{\|f(t)\|_{L^2(\mathbb{T}^2)}+ \|f_t(t)\|_{L^2(\mathbb{T}^2)}+\|f_{tt}(t)\|_{L^2(\mathbb{T}^2)}\} \leq C. 
\end{equation}

With the regularity assumption, we have the following error estimate for the fully discretized Navier-Stokes equation with Euler implicit/explict method from He \cite{He2008}. There are two positive constants $K_1$ and $K_2$ such that,
\begin{equation}
\label{chap8:IMEX approximation}
\begin{aligned}
&\left\|u^J(t_n)-u^{J,l,n}\right\|_{V} \leq K_{1}\left(h_l+\Delta t_l\right), \\
&\left\|p^J(t_n)-p^{J,l,n}\right\|_{L^{2}(\mathbb{T}^2)} \leq K_{2}\left(h_l+\Delta t_l\right),
\end{aligned}
\end{equation}
where $h_l = O(2^{-l})$ and $\Delta t_l = O(2^{-l})$ for all $n=1, ..., 2^l$. In particular $K_1$ and $K_2$ are independent from $\zeta$ and $\xi$. Hence we have the following error bound.
\begin{proposition}
\label{chap8:forward approximation}
Consider the FE approximation of the truncated mixed problem, with the error estimate (\ref{chap8:IMEX approximation}). There is a constant $C > 0$ such that for every $J, l \in \mathbb{N}$ and for every $(\zeta,\xi) \in U$, the following error bound holds 
\begin{equation}
\begin{aligned}
\|u(t_n, \cdot,\zeta,\xi)-u^{J,l,n}(\cdot, \zeta, \xi)\|_V &\leq C (2^{-l}+J^{-q}).% \\
%\|p(t_n, \cdot,\zeta,\xi)-p^{J,l,n}(\cdot, \zeta, \xi)\|_M &\leq C (2^{-l}+J^{-q}).
\end{aligned}
\end{equation}
\end{proposition}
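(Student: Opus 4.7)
The plan is to split the error by the triangle inequality into a truncation error (at the continuous-in-space, continuous-in-time level) and a fully-discrete FE error for the truncated problem:
\begin{equation*}
\|u(t_n,\cdot,\zeta,\xi)-u^{J,l,n}(\cdot,\zeta,\xi)\|_V \leq \|u(t_n,\cdot,\zeta,\xi)-u^{J}(t_n,\cdot,\zeta,\xi)\|_V + \|u^{J}(t_n,\cdot,\zeta,\xi)-u^{J,l,n}(\cdot,\zeta,\xi)\|_V.
\end{equation*}
The second term is controlled directly by the fully-discrete bound \eqref{chap8:IMEX approximation} of He, which gives $K_1(h_l+\Delta t_l)=O(2^{-l})$. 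Since $h_l=2^{-l}h_0$ and $\Delta t_l=T2^{-l}$, this piece contributes $C\cdot 2^{-l}$.

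For the first term, the strategy mirrors the proof of Proposition \ref{eqn:forward map truncation error}: apply the stability estimate of Lemma 3.13 in \cite{Cotter2009} in the form
\begin{equation*}
\|u(t_n,\cdot,\zeta,\xi)-u^{J}(t_n,\cdot,\zeta,\xi)\|_V \leq C\bigl(\|u_0\|_1,\|u_0^J\|_1,|f|_0,|f^J|_0\bigr)\bigl(\|u_0-u_0^J\|_1+|f-f^J|_0\bigr),
\end{equation*}
and then plug in the tail bound \eqref{truncation estimate} coming from Assumption \ref{random field}, namely $\|u_0-u_0^J\|_1\leq \sum_{i>J}\|\phi_i\|_1\leq C J^{-q}$ and $|f-f^J|_0\leq\sum_{i>J}|\psi_i|_0\leq C J^{-q}$. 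Uniform-in-$(\zeta,\xi)$ control of the prefactor is automatic from the uniform bounds $\|u_0(\zeta)\|_1\leq {u_0}_{\max}$, $|f(\xi)|_0\leq f_{\max}$ established right after Assumption \ref{random field}, so the continuous truncation error is at most $C J^{-q}$ with $C$ independent of $(\zeta,\xi)$.

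Combining the two pieces yields the claimed $C(2^{-l}+J^{-q})$ bound. The main obstacle, and the place that genuinely needs Assumption \ref{chap8:regularity assumption on u0 and f}, is ensuring that the constants $K_1,K_2$ in \eqref{chap8:IMEX approximation} are independent of $(\zeta,\xi)$; He's estimate requires $u_0\in D(A)$ and bounds on $f$, $f_t$, $f_{tt}$ in $L^\infty(0,T;[L^2(\mathbb T^2)]^2)$, and these need to hold uniformly over $U$. Assumption \ref{chap8:regularity assumption on u0 and f} is written exactly so that $\|Au_0(\zeta)\|_{L^2}\leq \|A\bar u_0\|_{L^2}+\sum_i\|A\phi_i\|_{L^2}\leq C$ and the analogous sup-in-$t$ bound on $f,f_t,f_{tt}$ hold for every $(\zeta,\xi)\in U$ (since $|\zeta_i|,|\xi_i|\leq 1$), which is precisely the uniformity used to invoke He's theorem with one $K_1$ valid for all parameter values. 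Once this uniformity is verified, the estimate is a one-line triangle inequality and the proposition follows.
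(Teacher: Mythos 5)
Your proposal is correct and follows exactly the paper's argument: the paper's proof is precisely the triangle-inequality split into the truncation error $\|u(t_n)-u^J(t_n)\|_V\leq CJ^{-q}$ and the fully-discrete FE error $\|u^J(t_n)-u^{J,l,n}\|_V\leq K_1(h_l+\Delta t_l)=O(2^{-l})$ from \eqref{chap8:IMEX approximation}. You supply more detail than the paper does (the stability estimate for the truncation term and the uniformity of $K_1$ via Assumption \ref{chap8:regularity assumption on u0 and f}), but the route is the same.
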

\begin{proof}
The proposition can be proved by considering the truncation error and the FE approximation error,
\begin{align*}
\|u(t_n, \cdot,\zeta,\xi)&-u^{J,l,n}(\cdot, \zeta, \xi)\|_V \\
& \leq \|u(t_n, \cdot,\zeta,\xi)-u^{J}(t_n,\cdot, \zeta, \xi)\|_V + \|u^{J}(t_n,\cdot,\zeta,\xi)-u^{J,l,n}(\cdot, \zeta, \xi)\|_V \\
&\leq C(2^{-l}+J^{-q}).
\end{align*}
%Similarly, we have 
%\begin{align*}
%\|p(t_n, \cdot,\zeta,\xi)-p^{J,l,n}(\cdot, \zeta, \xi)\|_M &\leq \|p(t_n, \cdot,\zeta,\xi)-p^{J,n}(\cdot, \zeta, \xi)\|_M+\|p^{J,n}(\cdot, \zeta, \xi)-p^{J,l,n}(\cdot, \zeta, \xi)\|_M \\
%&\leq C (2^{-l}+J^{-q}).
%\end{align*}
\end{proof}
To solve the ODE equation of the tracer trajectories, we use the backward Euler method, which leads to the following discretized equation,
\begin{equation}
\label{eqn:euler_backward}
    z_j^{n+1} = z_j^n + \Delta t \cdot u^{I,l,n+1}(z_j^n),
\end{equation}
where $z_j^0 = z_j(t=0)$ and $\Delta t$ is the time step size. 
\begin{lemma}
\begin{equation}
    |z_j(t_n) - z_j^n| \leq C(t_n) \cdot \Delta t
\end{equation}
\end{lemma}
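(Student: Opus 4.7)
The plan is to treat the scheme (\ref{eqn:euler_backward}) as a first‐order Euler step for the ODE (\ref{transport equation}), but with the exact velocity $u$ replaced by the FE‐and‐truncation approximation $u^{I,l,n+1}$, and to close the argument with a discrete Gronwall inequality. Set $e_n := |z_j(t_n) - z_j^n|$, so $e_0 = 0$ by the initialisation in (\ref{eqn:euler_backward}). The regularity (\ref{boundedness}) together with Assumption \ref{chap8:regularity assumption on u0 and f} and the two-dimensional Sobolev embedding $\mathbf{H}^2 \hookrightarrow W^{1,\infty}$ give that $u$ is spatially Lipschitz, uniformly in $(\zeta,\xi)$, with some constant $L$, and that $z_j \in C^2([0,T])$. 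A one-step Taylor expansion then yields
\[
z_j(t_{n+1}) = z_j(t_n) + \Delta t\, u(z_j(t_n), t_{n+1}) + R_n, \qquad |R_n| \leq C\, \Delta t^2.
\]

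Subtracting the scheme (\ref{eqn:euler_backward}) from this expansion and inserting the pivot $u(z_j^n, t_{n+1})$ gives
\[
e_{n+1} \leq e_n + \Delta t\, L\, e_n + \Delta t\, \bigl\|u(\cdot, t_{n+1}) - u^{I,l,n+1}\bigr\|_{L^\infty(\mathbb{T}^2)} + C\, \Delta t^2 .
\]
By Proposition \ref{chap8:forward approximation}, the $L^\infty$ defect is controlled by $C(2^{-l} + J^{-q})$, which under the regime $\Delta t_l \sim 2^{-l}$ and a truncation level $J^{-q} = O(\Delta t_l)$ is $O(\Delta t)$. The resulting recurrence $e_{n+1} \leq (1 + L\Delta t)\, e_n + C\,\Delta t^2$ together with discrete Gronwall gives $e_n \leq \frac{C \Delta t}{L}\bigl(e^{L t_n} - 1\bigr)$, which is the claimed bound with $C(t_n) = C\, e^{L t_n}$.

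The main obstacle is that Proposition \ref{chap8:forward approximation} only controls $u - u^{I,l,n+1}$ in the $V = \mathbf{H}^1$ norm, whereas the pointwise evaluation $u^{I,l,n+1}(z_j^n)$ needs an $L^\infty$ bound, and $H^1 \not\hookrightarrow L^\infty$ in two dimensions. I would circumvent this by introducing the FE interpolant $I_{h_l} u(\cdot, t_{n+1})$, splitting
\[
\bigl\|u - u^{I,l,n+1}\bigr\|_{L^\infty} \leq \bigl\|u - I_{h_l} u\bigr\|_{L^\infty} + \bigl\|I_{h_l} u - u^{I,l,n+1}\bigr\|_{L^\infty},
\]
using the standard interpolation bound $\|u - I_{h_l} u\|_{L^\infty} \leq C h_l$ (valid since Assumption \ref{chap8:regularity assumption on u0 and f} yields $u \in L^\infty(0,T;\mathbf{H}^2)$), and controlling the second term via the two-dimensional inverse estimate $\|v^l\|_{L^\infty} \leq C |\log h_l|^{1/2}\, \|v^l\|_V$ on $V^l$ combined with Proposition \ref{chap8:forward approximation}. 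This introduces at most a logarithmic factor that is absorbed into $C(t_n)$.
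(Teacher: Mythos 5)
Your proposal follows the same skeleton as the paper's proof: a one-step Taylor expansion of the exact trajectory, insertion of the pivot to split the velocity defect into a spatial Lipschitz term plus a discretization error evaluated at the numerical tracer position, and a discrete Gronwall inequality yielding $|e_n|\leq \frac{c}{\lambda}(e^{\lambda t_n}-1)\Delta t$. The one place where you genuinely diverge is the step you correctly single out as the main obstacle: controlling $\|u(\cdot,t_{n+1})-u^{I,l,n+1}\|_{L^\infty}$. The paper handles this by simply writing $|u_h(t_n,z_n)-u(t_n,z_n)|_{L^\infty}\leq |u_h(t_n,z_n)-u(t_n,z_n)|_{1+s}$, i.e.\ invoking the embedding $\mathbf{H}^{1+s}\hookrightarrow L^\infty$ in two dimensions, but it never establishes that the FE error is $O(h_l+\Delta t_l)$ in that stronger norm --- the available estimate \eqref{chap8:IMEX approximation} and Proposition \ref{chap8:forward approximation} are only in $V=\mathbf{H}^1$, so that step is left unjustified. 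Your alternative --- splitting through the FE interpolant, using $\|u-I_{h_l}u\|_{L^\infty}\leq Ch_l$ from the $\mathbf{H}^2$ regularity, and applying the two-dimensional inverse estimate $\|v^l\|_{L^\infty}\leq C|\log h_l|^{1/2}\|v^l\|_V$ to the discrete remainder --- closes this gap rigorously at the cost of a logarithmic factor absorbed into $C(t_n)$. In that sense your argument is the more complete of the two; the only caveat is that the log factor technically weakens the stated $O(\Delta t)$ rate to $O(\Delta t|\log \Delta t|^{1/2})$ unless you instead prove an $L^\infty$ or $\mathbf{H}^{1+s}$ FE error estimate directly, which is presumably what the paper intends but does not supply.
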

\begin{proof}
The euler method is given as in \eqref{eqn:euler_backward}. We consider the following Taylor expansion,
\begin{equation*}
z_j(t_{n+1}) = z_j(t_n) + \Delta t \cdot z_j'(t_n) + (\Delta t)^2 \cdot z_j''(t_n) + ... .
\end{equation*}
where we have $z_j''(t_n) = \frac{\partial u}{\partial t}(t_n, z_j(t_n)) + \frac{\partial u}{\partial z_j}(t_n, z_j(t_n))u(t_n, z_j(t_n))$ hence $|z_j''(t_n)|$ is bounded. Now we have the Taylor approximation,
\begin{equation}
\label{eqn:taylor approximation}
z_j(t_{n+1}) = z_j(t_n) + \Delta t \cdot u(t_n, z_j(t_n)) + \mathcal{O}((\Delta t)^2).
\end{equation}
Let $e_{n} = z_j(t_{n+1}) - z_j^{n+1}$. By comparing equation \eqref{eqn:euler_backward} and equation \eqref{eqn:taylor approximation}, we have
\begin{align*}
e_{n+1} &= e_n + \Delta t \cdot (u_h(t_n, z_j^n) - u(t_n, z_j(t_n))) + \mathcal{O}((\Delta t)^2) \\
&= e_n + \Delta t \cdot ( u_h(t_n, z_j^n) - u(t_n, z_j^n) + u(t_n, z_j^n) - u(t_n, z_j(t_n))  ) + \mathcal{O}((\Delta t)^2) \\
&= e_n + \Delta t \cdot ( u_h(t_n, z_n) - u(t_n, z_n)) + \Delta t \cdot (u(t_n, z(t_n)+e_n) - u(t_n, z(t_n))  ) + \mathcal{O}((\Delta t)^2) \\
& \leq (1+\Delta t \cdot  \lambda)|e_{n}| + \Delta t \cdot | u_h(t_n, z_n) - u(t_n, z_n)|_{L_{\infty}} + c(\Delta t)^2 \\
& \leq (1+\Delta t \cdot  \lambda)|e_{n}| + \Delta t \cdot | u_h(t_n, z_n) - u(t_n, z_n)|_{1+s} + c(\Delta t)^2 
\end{align*}
From this we can have the following,
\begin{align*}
|e_{n+1}| \leq (1+\Delta t \cdot  \lambda)|e_{n}| + c(\Delta t)^2.
\end{align*}
with $h$ being linearly proportional to $\Delta t$. From deduction, we can get 
\begin{equation}
    |e_n| \leq \frac{c}{\lambda} h ((1+h\lambda)^n-1) , 
\end{equation}
and subsquently we have
\begin{equation}
    |e_n| \leq \frac{c}{\lambda}(e^{\lambda t_{n}}-1)\cdot \Delta t. 
\end{equation}
\end{proof}

\section{Multilevel MCMC}
We follow the Multilevel Markov Chain Monte Carlo method first developed in \cite{Hoang2013} for elliptic equation and subsequently applied to the Navier Stokes equations with Eulerian observations in \cite{MR4523340}. We denote the posterior expectation of $\ell(u(t, \cdot, \zeta, \xi))$, where $\ell$, where $\ell$ is a bounded linear map on $V$. Similar to the setup in the above mentioned work, we choose $J$ s.t. $J^{-q} = \mathcal{O}(2^{-l})$ and denote $u^{J_L, l}$ as $u^{l}$, $\theta^{J_l, l}$ as $\gamma^{J_l, l, \delta}$ as $\gamma^l$. We also have the hierarchy of approximations $\gamma^l$ of the posterior measure $\gamma^{\delta}$. The MLMCMC estiamtor $E_L^{MLMCMC}[\ell(u]$ of $\mathbb{E}[\ell(u]$ is  

\begin{equation}
\label{chap4:MLMCMC estimator}
\begin{aligned}
    &E_L^{MLMCMC}[\ell(u)] \\
    &= \sum_{l=1}^L\sum_{l'=1}^{L'(l)} E_{M_{ll'}}^{\gamma^{l}} [(1-\exp(\Phi^{l}(\cdot,\cdot; \delta)-\Phi^{l-1}(\cdot,\cdot; \delta)))(\ell(u^{l'})-\ell(u^{l'-1}))]\\
    &+ \sum_{l=1}^L\sum_{l'=1}^{L'(l)} E_{M_{ll'}}^{\gamma^{l}}[\exp(\Phi^{l}(\cdot,\cdot;\delta)-\Phi^{l-1}(\cdot,\cdot;\delta))-1]\cdot E_{M_{ll'}}^{\gamma^{l-1}}[\ell(u^{l'})-\ell(u^{l'-1})] \\
    &+ \sum_{l=1}^L E_{M_{l0}}^{\gamma^{l}}[(1-\exp(\Phi^l(\cdot,\cdot; \delta)-\Phi^{l-1}(\cdot,\cdot; \delta)))(\ell(u^0))] \\
    &+ \sum_{l=1}^L E_{M_{l0}}^{\gamma^{l}}[\exp(\Phi^l(\cdot,\cdot;\delta)-\Phi^{l-1}(\cdot,\cdot;\delta))-1]\cdot E_{M_{l0}}^{\gamma^{l-1}}[\ell(u^0)] \\
    &+ \sum_{l'=1}^{L'(0)} E_{M_{0l'}}^{\gamma^{0}}[\ell(u^{l'})-\ell(u^{l'-1})] + E_{M_{00}}^{\gamma^{0}}[\ell(u^{0})]
\end{aligned}
\end{equation}
where $L'(l)$ is chosen below; and $E_{M_{ll'}}^{\gamma^l}$ denotes the MCMC sample average of the Markov chain generated by MCMC sampling procedure with the acceptance probability
\[
\alpha^{l}((\zeta,\xi), (\zeta',\xi')) = 1 \wedge \exp(\Phi^{l}(\zeta,\xi;\delta)-\Phi^{l}(\zeta',\xi'; \delta)), \quad (\zeta,\xi),(\zeta',\xi')  \in U,
\]
for the independence sampler and the pCN sampler we employ in the numerical implementation, with $M_{ll'}$ samples. 

With the following multi-level sampling numbers,
\begin{equation}
    L'(l) := L-l, \quad \text{ and } \quad M_{ll'} := 2^{2(L-(l+l'))},
\end{equation}
we have the following error estimate,
\begin{equation}
\label{chap8:MLMCMC error estimate}
    \mathbf{E}[|\mathbb{E}^{\gamma^{\delta}}[\ell(u)]-{E}_L^{MLMCMC}[\ell(u)]|]\leq C(\delta)L^2 2^{-L},
\end{equation}
where $\mathbf E$ denotes the expectation in the probability space of all the Markov chains in the MLMCMC sampling procedure. 
To reduce the effect of the $L^2$ multiplying factor in \eqref{chap8:MLMCMC error estimate}, we can slightly enlarge the sample size $M_{ll'}$ as
\begin{equation}
M_{ll'}=(l+l')^a2^{2(L-(l+l'))}
\label{eq:a}
\end{equation}
for $a>0$.
% We cite table \ref{chap8:numerical results} from \cite{Hoang2013} with summarised error estimates. For completeness, we include the detailed derivation in Appendix A. 
We have the following result from \cite{Hoang2020}.
\begin{table}[htbp!]
 %   \label{chap8:numerical results}
    \centering
    \caption{Total MLMCMC error with different sample size choices for uniform prior}
    \begin{tabular}{|c|c|c|c|c|}
    \hline$a$ & $M_{l l^{\prime}}, l, l^{\prime}>1$ & $M_{l 0}=M_{0 l}$ & $M_{00}$ & Total error \\
    \hline 0 & $2^{2\left(L-\left(l+l'\right)\right)}$ & $2^{2(L-l)} / L^{2}$ & $2^{2 L} / L^{4}$ & $O\left(L^{2} 2^{-L}\right)$ \\
    \hline 2 & $\left(l+l^{\prime}\right)^{2} 2^{2\left(L-\left(l+l^{\prime}\right)\right)}$ & $2^{2(L-l)}$ & $2^{2 L} / L^{2}$ & $O\left(L \log L 2^{-L}\right)$ \\
    \hline 3 & $\left(l+l^{\prime}\right)^{3} 2^{2\left(L-\left(l+l^{\prime}\right)\right)}$ & $l 2^{2(L-l)}$ & $2^{2 L} / L$ & $O\left(L^{1 / 2} 2^{-L}\right)$ \\
    \hline 4 & $\left(l+l^{\prime}\right)^{4} 2^{2\left(L-\left(l+l^{\prime}\right)\right)}$ & $l^{2} 2^{2(L-l)}$ & $2^{2 L} /\left(\log L^{2}\right)$ & $O\left(\log L 2^{-L}\right)$ \\
    \hline
    \end{tabular}
    \label{chap8:numerical results}
\end{table}
%
% MLMCMC method developed for uniform prior in \cite{Hoang2013} relies on the condition that $\Phi^l(z, \delta)-\Phi^{l-1}(z, \delta)$ is uniformly bounded. In Appendix B, we include the derivation of new MLMCMC developed for elliptic equation for log normal coefficient in \cite{Hoang2020}, which is an essential modification of the method in \cite{Hoang2013} to handle the unbounded coefficient.

{The  number of degrees of freedom for each time step is $O(2^{2l})$ for the FE resolution mesh size $O(2^{-l})$. For $O(2^l)$ time steps, the total number of degrees of freedom required for the forward solver  is  $O(2^{3l})$. Thus the total number of degrees of freedom required for computing the MLMCMC estimator with the finest resolution level $O(2^{-L})$ is
\begin{equation}
\lesssim \sum_{l=1}^L\sum_{l'=1}^{L-l} (l+l')^a2^{2(L-(l+l'))}(2^{3l}+2^{3l'})+\sum_{l=1}^L l^a2^{2(L-l)}2^{3l}+\sum_{l'=1}^L {l'}^a2^{3l'}+2^{2L}
\lesssim L^a2^{3L}.
\label{eq:dofs}
\end{equation}

\section{Numerical experiments}
In this section we implement the FE-MLMCMC method for approximating quantities of interest for Bayesian inverse problems for Navier-Stokes equation analyzed in the preceding sections. We employ the $\mathbb{Q}_1$-iso-$
\mathbb{Q}_2/\mathbb{Q}_1$ elements and Euler implicit/explicit time scheme for solving the forward Navier-Stokes equation. By lemma 4.27 of \cite{AlexandreErn2004}, the $\mathbb{Q}_1$-iso-$\mathbb{Q}_2$/$\mathbb{Q}_1$ elements satisfy the inf-sup condition. At each time step we  solve a saddle point system. We use the iterative FGMRES method with Schur complement precondition to solve the linear system efficiently. To illustrate the theoretical result, we consider the case where the forcing depends on one random variable as in this case we can compute a reference posterior expectation highly accurately, though we stress that the numerical method works for the case where the forcing and the initial condition depend on many random variables as shown theoretically above. 
In the first experiment, we consider the inverse problem of Navier-Stokes equation in the $(0, 1) \times (0, 1)$ square domain with the periodic boundary condition. For $\xi \sim U(0, 1.0)$, we consider the following model problem
\begin{equation}
\label{numerical problem setup}
    \begin{cases}
    \frac{\partial u}{\partial t} + u \cdot \nabla u - \nu \Delta u + \nabla p = f, &\quad \text{for } x\in \mathbb{T}^2, \\
    \nabla \cdot u = 0, &\quad \text{for } x\in\mathbb{T}^2, \\
    u(x,0) = 0;
    \end{cases}
\end{equation}
with the periodic boundary condition and the random forcing
\begin{equation}
    f = \begin{cases}
    \xi (\cos(2 \pi x_1)\sin(2 \pi x_2)+1.0) \exp(t),    \\
    -\xi (\sin(2 \pi x_1) \cos(2 \pi x_2)+1.0) \exp(t).
\end{cases}
\end{equation}
The forward observation functional is $\{z_i(t=0.5)\}_{i=1}^5, \{z_i(t=1.0)\}_{i=1}^5$. The quantity of interest is 
\begin{equation}
    100 \int_D \left(x_1^{1/2}x_2^{1/2}\frac{\partial u_1(x, 1.0)}{\partial x_2} - x_1^{1/2}x_2^{1/2}\frac{\partial u_2 (x, 1.0)}{\partial x_1}\right)dx.
\end{equation}
With the Gaussian prior, we generate a random realization of the solution by solving the forward problem with a randomly generated $\xi$. Then a random observation is obtained with a randomly generated Gaussian noise from $N(0, 1.0)$ to the forward functional. The observed tracer location is $z_1=\{(0.640, 0.795), (0.172, 0.195), \\ (1.134, 0.079), (0.173, -0.027), (0.846, 0.411)\}$ and $z_2 = \{(1.112, 0.323), (0.659, -0.303), (1.628, -0.429), \\ (0.656, -0.524), (1.353, -0.089)\}$. The initial tracer locations are $\{(0.491,0.908), (0.012,0.312), (0.975,0.203), \\ (0.035,0.111), (0.725,0.575)\}$. 
%$z_1=\{(0.63979175, 0.79484298), \\ (0.17199418, 0.19541597), (1.13408598, 0.07899129), (0.17270835, -0.02744946), (0.84578063, 0.41139245)\}$ and $z_2 = \{(1.11225036, 0.32293072), (0.65912191, -0.30282125), (1.62780146, -0.42925516), (0.65568777, -0.5237915), \\ (1.35274024, -0.08863465)\}$. The initial tracer locations are $\{(0.49081423,0.90790136), (0.01243684,0.31235), \\ (0.97549782,0.20306799), (0.03477005,0.1107383), (0.72502269,0.57489244)\}$. 
The reference posterior expectation of the quantity of interest is $-1.0924$ which is  computed by Gauss-Legendre quadrature with the highly accurate Fourier spectral forward solver with a total of $128 \times 128$ spectral points and 4th order accurate Runge-Kutta explicit time stepping with 0.0001 time step size. Detailed implementation of the Fourier spectral method for Navier-Stokes equation can be found in \cite{Peyret2002}. 

\begin{table}[]
    \centering
    \begin{tabular}{|c|c|c|c|c|c|c|} \hline
        L &  1 & 2 & 3 & 4 & 5 & 6\\ \hline
        error (a=2) & 0.663275 & 0.338971 & 0.252836 & 0.1122 & 0.062769 & 0.031369\\ \hline
        error (a=3) & 0.663275 & 0.397023 & 0.190823 & 0.078067 & 0.032926 & \\ \hline
    \end{tabular}
    \caption{Error Table}
    \label{tab:my_label}
\end{table}

\begin{figure}[htbp!]
    \centering
    \caption{MLMCMC error for 2D Navier-Stokes equation with uniform prior, a=2}
    \includegraphics[width=0.8\textwidth]{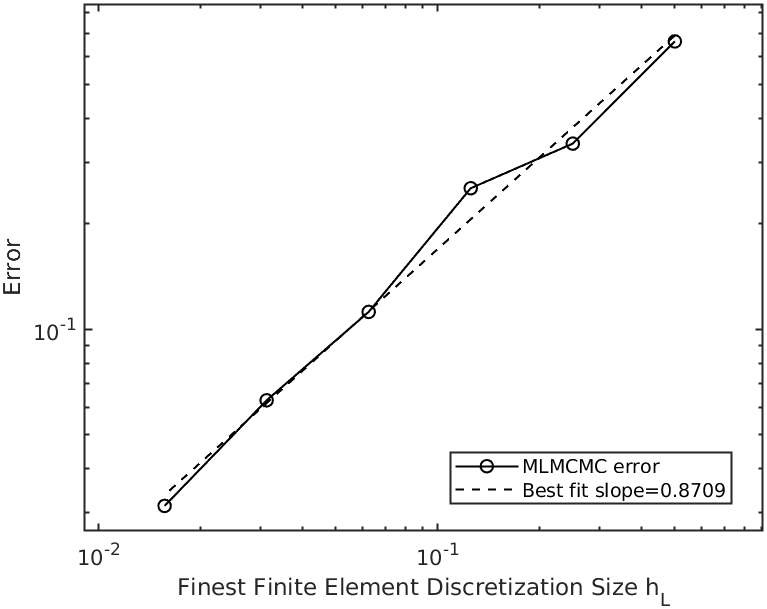}
\end{figure}

\begin{figure}[htbp!]
    \centering
    \caption{MLMCMC error for 2D Navier-Stokes equation with uniform prior, a=3}
    \includegraphics[width=0.8\textwidth]{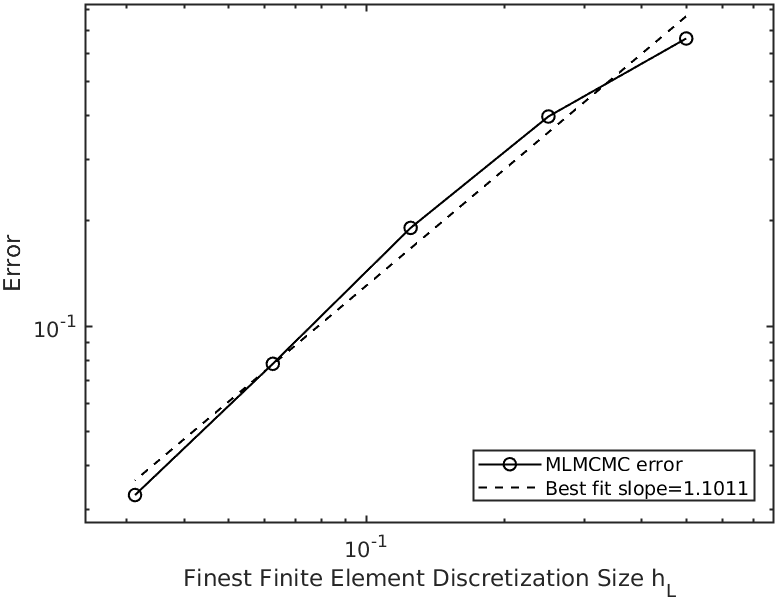}
\end{figure}

\bibliographystyle{siam}
\bibliography{mybibfile}

\end{document}